\def\End{{\rm End}}
\def\oM{\overline{\mathcal{M}}}
\def\C{\mathbb{C}}
\def\b1{{\mathbf 1}}
\def\E{\mathrm{E}}
\def\n{\mathrm{n}}
\def\L{\mathrm{L}}
\def\V{\mathrm{V}}
\def\g{\mathrm{g}}
\def\rarr{\rightarrow}
\def\W{\mathsf{W}}
\def\D{\mathsf{D}}
\def\rbig{10mm}
\def\rsmall{6mm}
\numberwithin{equation}{section}
\newtheorem{Theorem}{Theorem}[section]
\newtheorem*{Theorem*}{Theorem}
\newtheorem{Proposition}[Theorem]{Proposition}
 { \theoremstyle{definition}
\newtheorem{Definition}[Theorem]{Definition}

\newtheorem{Example}[Theorem]{Example}
\newtheorem{Remark}[Theorem]{Remark} }
\begin{document}

%\allowdisplaybreaks

\newcommand{\arXivNumber}{2112.10182}

\renewcommand{\PaperNumber}{088}

\FirstPageHeading

\ShortArticleName{On the Picard Group of the Moduli Space of Curves via $r$-Spin Structures}

\ArticleName{On the Picard Group of the Moduli Space of Curves\\ via $\boldsymbol{r}$-Spin Structures}

\Author{Danil GUBAREVICH~$^{\rm ab}$}

\AuthorNameForHeading{D.~Gubarevich}

\Address{$^{\rm a)}$~Laboratoire de Math\'ematiques de Versailles, UFR des Sciences, Universit\'e de Versailles\\
\hphantom{$^{\rm a)}$}~Saint-Quentin en Yvelines, 45 avenue des \'Etats-Unis, 78035 Versailles, France}
\EmailD{\href{mailto:danil.gubarevich@uvsq.fr}{danil.gubarevich@uvsq.fr}}

\Address{$^{\rm b)}$~Faculty of Mathematics, National Research University Higher School of Economics,\\
\hphantom{$^{\rm b)}$}~6 Usacheva Str., 119048 Moscow, Russia}

\ArticleDates{Received January 05, 2022, in final form August 27, 2024; Published online October 06, 2024}

\Abstract{In this paper, we obtain explicit expressions for Pandharipande--Pixton--Zvon\-kine relations in the second rational cohomology of $\oM_{g,n}$ and comparing the result with Arbarello--Cornalba's theorem we prove Pixton's conjecture in this case.}

\Keywords{moduli space of curves; tautological relations; cohomological field theories}

\Classification{14H10; 14N35}

\section{Introduction}

The study of cohomology of the moduli space of curves was initiated by Mumford \cite{M} in the~1980s. The most progress has been made in understanding the subring $R^*(\mathcal{M}_{g})$ of tautological classes. A systematic study by Faber and Zagier of the algebra of $\kappa$ classes on the moduli space $\mathcal{M}_{g}$ of nonsingular genus g curves led to a conjecture in 2000 of a concise set FZ of relations among~$\kappa$ classes in
\[
R^*(\mathcal{M}_{g}) \subset A^*(\mathcal{M}_{g}).
\]
 There are several proofs of the Faber--Zagier conjecture, one of them is given by Pandharipande and Pixton in~\cite{PP}, exploiting moduli space of stable quotients on $\mathbb{P}^1$.

Recent progress in 2012 has appeared in Pixton's paper \cite{Pix} about hypothetically the full set of relations in the tautological ring of $\oM_{g,n}$. Pixton's relations recover FZ when restricted to~${\mathcal{M}_{g}\subset \overline{\mathcal{M}_{g}}}$.

Using homogeneity of Witten's 3-spin class,
authors of \cite{PPZ} proved Pixton's relations hold in~$H^*\big(\oM_{g,n},\mathbb{Q}\big)$, proving Faber--Zagier conjecture in cohomology.
The set of Pixton's relations (in Chow or cohomology) is conjectured to be full.

In this paper, we explicitly obtain the Pandharipande--Pixton--Zvonkine (PPZ or also called $r$-spin) relations \cite{PPZ,PPZ16} in the group $RH^2\big(\oM_{g,n},\mathbb{Q}\big)$
(although we omit the easiest case $g=0$ for brevity). We show that the PPZ relations we obtained coincide with relations from Arbarello--Cornalba's result \cite[Theorem 2.2]{Cor}. In \cite{Cor}, the authors proved that their set of relations in~$H^2\big(\oM_{g,n},\mathbb{Q}\big)$ is full. Also
using the fact that PPZ (for $r$ = 3) relations coincide with Pixton's relations and hold in $H^*\big(\oM_{g,n}, \mathbb{Q}\big)$, proved in \cite{PPZ}, we prove Pixton's conjecture in cohomology for this codimension.

\section{Preliminaries}
\subsection[Tautological classes on M\_\{g,n\}]{Tautological classes on $\boldsymbol{\oM_{g,n}}$}

Throughout the article, we are working with cohomology with coefficients in $\mathbb{Q}$.

Let $\oM_{g,n}$ be the Deligne--Mumford compactification by stable curves of the moduli space of nonsingular complex genus $g$ curves with $n$ markings, see \cite{Z} for the survey. Let
$
\pi\colon \overline{\mathcal{C}}_{g,n} \to \overline{\mathcal{M}}_{g,n}
$
be the universal curve. Considering $\overline{\mathcal{C}}_{g,n}$ and $\overline{\mathcal{M}}_{g,n}$ as orbifolds is sufficient for our purposes.

The fiber of a cotangent line bundles $\mathbb{L}_i$, $i=1,\dots,n$ over $\oM_{g,n}$ over a point $x \in \oM_{g,n}$ is a~cotangent line to a curve $(C_x,x)\in \overline{\mathcal{C}}_{g,n}$ at $i$-th marked points.
Denote the first Chern class of the line bundles $\mathbb{L}_i$ by $\psi_i$ {$($\em $\psi$-classes$)$}
\begin{align*}
\psi_i = c_1(\mathbb{L}_i)\in H^2\big(\oM_{g,n},\mathbb{Q}\big).
\end{align*}
{\em $\kappa$-classes} are defined via the universal map $\pi\colon \oM_{g,n+1} \to \oM_{g,n}$,
\begin{align*}
\kappa_i = \pi_{*}\big(\psi^{i+1}_{n+1}\big)\in H^{2i}\big(\oM_{g,n},\mathbb{Q}\big).
\end{align*}

Natural maps between moduli spaces of curves include:
\begin{enumerate}\itemsep=0pt
\item[(i)] Maps forgetting the last m points and further stabilizing the curve
$
p_m \colon \oM_{g, n+m} \to \oM_{g,n}$.
\item[(ii)] Gluing maps of separating kind
$
r\colon \oM_{g_1, n_1+1} \times \oM_{g_2, n_2+1} \to \oM_{g,n}$
which identify the point with label $n_1+1$ on the first curve with point label $n_2+1$ on the second curve.

\item[(iii)] Gluing maps of nonseparating kind
$
q \colon \oM_{g-1, n+2} \to \oM_{g,n}$
which identifies points with labels $n+1$ and $n+2$.
\end{enumerate}

Let
$
RH^*\big(\oM_{g,n}\big) \subset H^*\big(\oM_{g,n}\big)
$
be the subring of tautological
classes in the rational cohomology ring. It is defined as the smallest $\mathbb{Q}$-subalgebra closed under push-forwards by forgetting and gluing maps.

First examples of elements in $RH^*\big(\oM_{g,n}\big)$ include the following classes.
$1\in H^{0}\big(\oM_{g,n}\big)$ is tautological since $RH^*\big(\oM_{g,n}\big)$ is a ring with unit by definition. Classes represented by boundary strata and their intersections also lie in tautological ring since they are images under gluing maps.
It is clear that $\kappa$- and $\psi$-classes lie in $RH^*\big(\oM_{g,n}\big)$ from their definition and from relation $\psi_i = - \pi_{*}\big(D^{2}_{i}\big)$, where $D_i \in H^{2}\big(\oM_{g,n+1},\mathbb{Q}\big)$ is a class of a divisor whose generic point is represented by a nodal curve with two irreducible components of genera $g$ and $0$ and
$i$-th and~$(n+1)$-th points lie on a rational component.

The natural question are the rings $RH^*\big(\oM_{g,n}\big) \subset H^*\big(\oM_{g,n}\big)$ isomorphic is open but there was a significant progress from the pioneering studies of moduli of curves initiated by Mumford.

\subsection{Cohomological field theories} \label{cft}
We recall here the basic definitions of a cohomological field theory (CohFT) by Kontsevich and Manin, see \cite{PPP} for a survey.

It consists of the following data ($V$,$\eta$,$\b1$,$(c_{g,n})_{2g-2+n>0}$), where
\begin{enumerate}\itemsep=0pt
\item[(i)] $V$ is a finite-dimensional $\mathbb{Q}$-vector space,
\item[(ii)]$\eta$ is a non-degenerate symmetric bilinear 2-form (metric),
\item[(iii)]$\b1\in V$ is a distinguished element(unit vector).
\end{enumerate}

Choosing a basis $\{e_i\}$ of $V$, denote by $\eta_{jk}=\eta(e_j,e_k)$ the value of the metric on the basis vectors and by $\eta^{jk}$ the inverse matrix.
Given these data $(V,\eta,\b1)$ the $(c_{g,n})_{2g-2+n > 0}$ is a~collection of linear operators
$c_{g,n} \in H^*\big(\oM_{g,n}\big) \otimes (V^*)^{\otimes n}$.
It means that $c_{g,n}$ associate a (non-homogeneous)cohomology class $c_{g,n}(x_1\otimes \dots \otimes x_n)\! \in H^*\big(\oM_{g,n}\big)$ to a vector
$x_1\otimes \dots \otimes x_n \in V^{\otimes n}$.

The CohFT axioms imposed on $c_{g,n}$ are the following.

\begin{Definition}\label{defcohft}
A tuple ($V$,$\eta$,$\b1$,$(c_{g,n})_{2g-2+n>0}$) is called a {\em cohomological field theory $($with unit$)$} if it satisfies the following axioms:
\begin{enumerate}\itemsep=0pt
\item[(i)] For all $x_i \in V$,
\begin{align*}
c_{g,n}(x_1\otimes \dots \otimes x_n) = c_{g,n}(x_{\sigma(1)}\otimes \dots \otimes x_{\sigma(n)})
\end{align*}
for each $\sigma$ in the symmetric group $\Sigma_n$ acting on the set of marked points and simultaneously on
copies of $V$ in the tensor product $V^{\otimes n}$.
\item[(ii)] The pullback of $c_{g,n}$ under the gluing map $q$ of nonseparating kind equals the contraction of $c_{g-1,n+2}$ with
$\eta^{jk}\otimes e_j \otimes e_k$ inserted at the two identified points\footnote{Here and in the rest of the paper, we use the Einstein summation convention.}
\begin{align*}
q^*c_{g,n}(x_1\otimes \dots \otimes x_n) =
\sum_{j,k} \eta^{jk} c_{g-1,n+2}(x_1\otimes \dots \otimes x_n \otimes e_j \otimes e_k)
\end{align*}
in $H^*(\oM_{g-1,n+2},\mathbb{Q})$ for all $x_i \in V$.
The pullback of $c_{g,n}$ under a gluing map $r$ of separating kind equals the contraction $c_{g_1, n_1+1} \otimes c_{g_2, n_2+1}$ again with $\eta^{jk}\otimes e_j \otimes e_k$ inserted at the two identified points
\begin{gather*}
r^*(c_{g,n}(x_1\otimes \dots \otimes x_n)) \\
\qquad=\sum_{j,k} \eta^{jk} c_{g_1,n_1+1}(x_1\otimes \dots \otimes x_{n_1}\otimes e_j) \otimes
c_{g_2,n_2+1}(x_{n_1+1}\otimes \dots \otimes x_n \otimes e_k)
\end{gather*}
in $H^*\big(\oM_{g_1,{n_1+1}},\mathbb{Q}\big) \otimes
H^*\big(\oM_{g_2,{n_2+1}},\mathbb{Q}\big)$ for all $x_i \in V$.

\item[(iii)]The pullback of $c_{g,n}$ under the map $p$ forgetting the last point is required to satisfy
\begin{align*}
c_{g,n+1}(x_1\otimes \dots \otimes x_n \otimes \b1) = p^*c_{g,n} (x_1\otimes \dots \otimes x_n)\
\end{align*}
for all $x_i \in V$.
In addition, the equality
$
c_{0,3}(x_i\otimes x_j\otimes \b1) = \eta(x_i,x_j)
$
is required for all~${x_i \in V}$.
\end{enumerate}
\end{Definition}

\begin{Remark}\label{cohunit}
A tuple ($V$, $\eta$, $(c_{g,n})_{2g-2+n>0})$ is called a {\em cohomological field theory $($without unit$)$} if it satisfies properties (i) and (ii) above.
\end{Remark}

The degree zero part $\omega$ of a CohFT $c= (c_{g,n})_{2g-2+n>0}$ is called the \emph{topological part} of $c$
 \begin{align*}
 \omega_{g,n} = [c_{g,n}]^0 \in H^0\big(\oM_{g,n},\mathbb{Q}\big) \otimes
(V^*)^{\otimes n}.
 \end{align*}

Via property (ii) one sees that $\omega_{g,n}(x_1 \otimes \dots \otimes x_n)$
is determined by considering stable curves with a maximal number of nodes.
Such a curve is obtained by identifying several rational curves with three marked points. The value of $\omega_{g,n}(x_1 \otimes \dots \otimes x_n)$ is
thus uniquely specified by the values of $\omega_{0,3}$ and by the quadratic form~$\eta$. In other words, given $V$ and $\eta$, a topological part is uniquely determined by the associated quantum product, introduced in the next subsection.

Our main example of CohFT will be the CohFT associated with Witten's $r$-spin class and its shifted version.
\begin{Example}[Witten's $r$-spin class]
Let $r\geq2$ be an integer. For every $r$, there is a CohFT obtained from
Witten's $r$-spin class, introduced by Witten in \cite{Witten}. See \cite{P,PV} for an algebraic construction of Witten's top Chern class where
it was proved that the axioms of CohFT (i)--(iii) hold.

Witten's $r$-spin theory gives a family of classes
\begin{align*}
W^r_{g,n}(a_1, \dots, a_n) \in H^*\big(\oM_{g,n}\big) \end{align*}
for $a_1, \dots, a_n \in \{0, \dots, r-2 \}$.

Now given an $(r-1)$-dimensional $\mathbb{Q}$-vector space $V_r$ with basis
 $e_0, \dots, e_{r-2}$, metric
\begin{align*}
\eta_{ab} = \eta(e_a, e_b) =\delta_{a+b,r-2} ,
\end{align*}
and unit vector $\b1 = e_0$ one defines a CohFT $\W^r_{g,n}$ by
\begin{align*}
\W^r_{g,n}\colon\ V_r^{\otimes n} \rightarrow H^*\big(\oM_{g,n}\big),
\qquad
\W^r_{g,n}( e_{a_1} \otimes \dots \otimes e_{a_n}) =
W^r_{g,n}(a_1, \dots, a_n) .
\end{align*}
\emph{Witten's class} $W^r_{g,n}(a_1, \dots, a_n)$ has (complex) degree given
by the formula
\begin{align*}%\label{gred}
\text{deg}_{\C} W^r_{g,n}(a_1, \dots, a_n) & =
\D^r_{g,n}(a_1, \dots, a_n) = \frac{(r-2)(g-1) + \sum_{i=1}^n a_i}{r} .
\end{align*}
If $\D^r_{g,n}(a_1, \dots, a_n)$ is not an integer, the corresponding
Witten's class vanishes.
\end{Example}

To formulate the Givental--Teleman classification result, we need the following notion of semisimplicity of a CohFT.

Let $(V,\eta,\b1)$ be a data associated with a CohFT $c_{g,n}$.
For $x_1, x_2\in V$, the {\em quantum product}~${x_1 \bullet x_2\in V}$ is uniquely determined by the condition:
for every $x_3\in V$
\begin{align*}
\eta(x_1 \bullet x_2, x_3) = c_{0,3}(x_1 \otimes x_2 \otimes x_3)\in \mathbb{Q} .\end{align*}
The quantum product $\bullet$ is commutative by CohFT axiom (i). Let us check that associativity of $\bullet$ follows from CohFT axiom (ii).
Further, we often write just $c_{0,3}(x_1, x_2, x_3)$ instead of~${c_{0,3}(x_1 \otimes x_2 \otimes x_3)}$ for short.
Fix a basis $\{e_i\}$ of $V$. Then the product in this algebra gives rise to structure constants $e_a \bullet e_b = c^{i}_{ab}e_i$, where the Einstein summation convention is assumed. Hence
\begin{align*}
\eta((e_a\bullet e_b)\bullet e_c,e_d) = c^{i}_{ab} \eta(e_i\bullet e_c, e_d)
=c^{i}_{ab} c^{j}_{ic} \eta_{jd}.
\end{align*}
Observe that $c^{i}_{ab} = \eta^{ic}c_{0,3}(e_a,e_b,e_c)$, hence we get
\begin{align*}
\eta^{ip}c_{0,3}(e_a,e_b,e_p)\eta^{jq}c_{0,3}(e_i,e_c,e_q)\eta_{jd},
\end{align*}
which is the same as $q^{*}c_{0,4}(e_a,e_b,e_c,e_d)$ by axiom (ii). On the other hand
\begin{align*}
\eta(e_a\bullet (e_b\bullet e_c),e_d)& = c^{i}_{bc} \eta(e_a\bullet e_i, e_d)
=c^{i}_{bc} c^{j}_{ai} \eta_{jd}
=\eta^{ip}c_{0,3}(e_b,e_c,e_p)\eta^{jq}c_{0,3}(e_a,e_i,e_q)\eta_{jd} \\
&= c_{0,3}(e_b,e_c,e_p)\eta^{ip}c_{0,3}(e_a,e_i,e_d),
\end{align*}
which is again $q^{*}c_{0,4}(e_a,e_b,e_c,e_d)$ by axioms (i) and (ii). Then from the nondegeneracy of $\eta$ the associativity follows. Moreover, a simple check shows that $(V,\bullet,\b1)$ is a Frobenius algebra, see~\cite{K03}.
A CohFT $c_{g,n}$ is called \emph{semisimple} if the algebra $(V,\bullet,\b1)$ is semisimple, i.e., the complexification~${V \otimes_{\mathbb{Q}} \mathbb{C}}$ has a basis $\{e_i\}$ of idempotents
$e_i\bullet e_j = \delta_{ij}e_i$.
Since $\W^r_{g,n}$ itself is not semisimple, we will need its shifted version.
\begin{Example}[shifted Witten's $r$-spin class]\label{swc}
Given a vector $\tau \in V_r$, the shifted Witten's class is defined by
\begin{align*}
\W_{g,n}^{r,\tau}(v_1 \otimes \cdots \otimes v_n) =
\sum_{m \geq 0}\frac{1}{m!}
p_{m*} \W^r_{g,n+m}\big(v_1 \otimes \cdots \otimes v_n \otimes \tau^{\otimes m}\big),
\end{align*}
where $p_m \colon \oM_{g,n+m} \to \oM_{g,n}$ is the forgetful map.
The \emph{shifted Witten's class} $\W^{r,\tau}$ determines a~CohFT with unit. Namely,
applying a map forgetting the last point, we get
\begin{align*}
p^{*}_1 \W_{g,n}^{r,\tau}(v_1 \otimes \cdots \otimes v_n) &
=\sum_{m \geq 0}\frac{1}{m!}
p_{m*}p^{*}_{1} \W^r_{g,n+m}\big(v_1 \otimes \cdots \otimes v_n \otimes \tau^{\otimes m}\big)\\
&=\sum_{m \geq 0}\frac{1}{m!} p_{m*}\W^r_{g,n+m+1}\big(v_1 \otimes \cdots \otimes v_n \otimes \tau^{\otimes m}\otimes \b1\big)\\
&=\W^r_{g,n+1}(v_1 \otimes \cdots \otimes v_n \otimes \b1).
\end{align*}
Moreover, let us choose a particular point $\widetilde{\tau}=(0,r\phi,0,\dots,0)=r\phi e_1\in V_r$ and check the semisimplicity of this CohFT.

Denote by $\langle a_1,\dots,a_n\rangle^{\tau}:= \int_{\oM_{0,n}}\W_{0,n}^{r,\tau}(e_{a_1} \otimes \cdots \otimes e_{a_n})$ the \emph{n-point function}.
Firstly, we use the known values for Witten's class
\begin{align*}
 \W_{0,3}(e_a,e_b,e_p)=\delta_{p,r-2-a-b}, \qquad \W_{0,4}(e_1,e_1,e_{r-2},e_{r-2})=\frac{1}{r}[pt]\in H^2\big(\oM_{0,4},\mathbb{Q}\big).
\end{align*}
Using \cite[Proposition 4.1]{PPZ16},
we compute the 3-point function
$\langle a,b,p\rangle^{\widetilde{\tau}}\! =\! \delta_{p,r-2-a-b}+\delta_{p,2r-3-a-b}\phi$.
Then we compute
\begin{align*}
e_{a} \bullet_{\widetilde{\tau}} e_{b} = c^{i}_{ab}e_i=\eta^{ip}\W_{0,3}^{r,\widetilde{\tau}}(e_a,e_b,e_p)e_i
=
\begin{cases}
\eta^{i, r-2-a-b} e_i = e_{a+b}& \text{if } a+b \leq r-2 ,\\
\phi \eta^{i, 2r-3-a-b}e_i= \phi e_{a+b-r+1} & \text{if } a+b \geq r-1.
\end{cases}
\end{align*}
Then changing a basis to $\widetilde{e}_a:=\phi ^{-a/(r-1)}e_a$ the new product is
\begin{align*}
\widetilde{e}_a \bullet_{\widetilde{\tau}} \widetilde{e}_b =
\phi^{-(a+b)/r-1} e_a \bullet e_b =
\begin{cases}
\widetilde{e}_{a+b} & \text{if } a+b\leq r-2,\\
\widetilde{e}_{a+b-r-1}& \text{if } a+b\geq r-1
\end{cases}
=
\widetilde{e}_{a+b \mod r-1}.
\end{align*}
Finally, after another change of basis $\bar{e}_i :=\sum^{r-2}_{a=0} \xi ^{a i} \widetilde{e}_a$, $\xi^{r-1} =1$, we derive a basis of idempo\-tents
\begin{align*}
\bar{e}_i \bullet_{\widetilde{\tau}} \bar{e}_j =
\sum_{a,b=0}^{r-2} \xi^{a i +b j}\widetilde{e}_{a+b \mod r-1}=
\sum_{a,c=0}^{r-2}\xi^{a(i-j)+c j}\widetilde{e}_j=
\sum^{r-2}_{a=0} \xi ^{a(i-j)}\bar{e}_{j} = (r-1)\delta_{ij}\bar{e}_{j},
\end{align*}
since if $i-j = x\neq 0$ then $1+\xi^{x}+\dots + \xi ^{(r-2)x} = 0$. So, $\{\bar{e}_{i}/(r-1)\}$ is a basis of idempotents.
\end{Example}

Now we recall how to assign to the given CohFT $(V,\eta,\mathbf{1},(c_{g,n})_{2g-2+n>0})$ a Frobenius manifold structure on $V$. Choose a basis $V=\mathbb{C}\langle e_1,\dots, e_N\rangle$ and a distinguished vector $\mathbf{1} = e_1$.
A~full CohFT potential $\mathcal{F}\in \mathbb{C}[[t^{*,*},\hbar]]$ is the following formal power series in $t^{d_k,i_k}$, $d_k \geq 0$, $i_k \in \{1,\dots,N\}$:
\begin{align*}
 \mathcal{F}\big(\big\{t^{d_k,i_k}\big\},\hbar\big) := \sum_{g\geq0}\hbar^{g-1}\mathcal{F}_g,
\end{align*}
where
\begin{align*}
 \mathcal{F}_g\big(\big\{t^{d_k,i_k}\big\}\big) := \sum_{n\geq x(g)}\frac{1}{(n-x(g))!}\sum_{\substack{d_1,\dots,d_n \\ i_1,\dots,i_n}} \int_{\oM_{g,n}}c_{g,n}(e_{i_1},\dots,e_{i_n})\psi_1^{d_1}\cdots \psi_n^{d_n}t^{d_1,i_1}\cdots t^{d_n,i_n}.
\end{align*}
The stability condition is captured by{\samepage
\begin{align*}
x(g):=
\begin{cases}
3 & \text{if} \ g=0 ,\\
1 & \text{if} \ g=1 ,\\
0 & \text{if} \ g\geq 2 .
\end{cases}
\end{align*}
Note that the intersection numbers above are nonzero only in degrees $3g-3+n$.}

A \emph{Frobenius potential} $F \in \mathbb{C}\big[\big[t^1,\dots,t^N\big]\big]$ is defined as
\begin{align*}%\label{Frobpot}
 F\big(t^1,\dots,t^N\big) :={}& \mathcal{F}_0\big(\big\{t^{d_k,i_k}\big\}\big)|_{\{t^{d_k,*}=0, d_k>0\}}\\
={}&
 \sum_{n\geq 3}\frac{1}{(n-3)!}\sum^N_{i_1,\dots,i_n=1} \int_{\oM_{0,n}}c_{0,n}(e_{i_1},\dots,e_{i_n}) t^{i_1}\cdots t^{i_n},
\end{align*}
where we put $t^{\mu}:=t^{0,\mu}$.

It gives the structure of \emph{formal Frobenius manifold} with flat metric
$
 \eta_{\mu\nu} = \frac{\partial^3 F}{\partial t^1\partial t^{\mu}\partial t^{\nu}}
$
and a~Frobenius algebra structure with multiplication $e_{\alpha} \bullet e_{\beta}:=c_{\alpha \beta \gamma}\eta^{\gamma \theta} e_{\theta}$, where
\begin{align*}
 c_{\alpha \beta \gamma}(t^*)= \frac{\partial^3 F}{\partial t^{\alpha}\partial t^{\beta}\partial t^{\gamma}}, \qquad \big(\eta^{\alpha\beta}\big):= (\eta_{\alpha\beta})^{-1}.
\end{align*}
CohFT axioms force $F$ to satisfy the WDVV equation
$
 c_{\alpha \beta \theta} \eta^{\theta\delta}
 c_{\delta \gamma \rho}=
 c_{\alpha \gamma \theta} \eta^{\theta\delta}
 c_{\delta \beta \rho}$.

\subsection{Reconstruction}\label{GTC}
Let $c=(c_{g, n})_{2g-2+n>0}$ be a CohFT, not necessarily with unit,
associated with data $(V, \eta)$.
Let~$R$ be a matrix series
\begin{align*}
R(z)= \sum_{k=0}^\infty R_k z^k
\in \mathrm{Id} +z\cdot \operatorname{End}(V)[[z]],
\end{align*}
which satisfies the symplectic condition
$\eta(R(z)x,R(-z)y)=\eta(x,y)$
for all $x$, $y$ in $V$. The group of such series acts on a space of CohFTs and we now recall the action, following closely \cite[Section~1.2.1]{PPP}.

One can define a
 new CohFT $Rc$
on the vector space $(V, \eta)$ is expressed as a sum over stable graphs $\Gamma$, see \cite[Section 0.2]{PPZ} for an exposition,
with summands given by
 products of vertex, edge, and leg contributions. Let $\mathsf{G}_{g,n}$ denote the set of all stable graphs
(up to isomorphism) of genus $g$ with $n$ legs. To each stable graph $\Gamma$, one can associate the moduli space
\smash{$
\oM_\Gamma =\prod_{v\in \V} \oM_{\g(v),\n(v)}
$}
with $2g(v)-2+n(v)>0$.
 There is a
canonical morphism
$\iota_{\Gamma}\colon \oM_{\Gamma} \rarr \oM_{g,n}$
 with image equal to the boundary stratum
associated to the graph $\Gamma$.

Then new CohFT $Rc$ is expressed as
\[%\label{romega}
(Rc)_{g, n}=
\sum_{\Gamma\in\mathsf{G}_{g,n}} \frac{1}{|\mathrm{Aut}(\Gamma)|} \iota_{\Gamma \star} \biggl(
\prod_{v\in \V}\mathrm{Cont}(v)\prod_{e\in \E}\mathrm{Cont}(e)
\prod_{l\in \L}\mathrm{Cont}(l)
 \biggr) ,
 \]
 where we place
\begin{itemize}\itemsep=0pt
\item [(i)]
$\mathrm{Cont}(v)=c_{g(v), n(v)}$
at each vertex, where $g(v)$ and $n(v)$ denote the genus and number of half-edges and legs of the vertex,
\item [(ii)] the $\operatorname{End}(V)$-valued cohomology class
$\mathrm{Cont}(l)=R^{-1}(\psi_l)$
at each leg,
$\psi_l\!\in\! H^2\big(\oM_{g(v),n(v)},\mathbb{Q}\big)$
is the cotangent class at the marking corresponding to the leg,
\item [(iii)]
$\mathrm{Cont}(e)=\bigl(\eta^{-1}-R^{-1}(\psi'_e)\eta^{-1} R^{-1}(\psi''_e)^{\top}\bigr)/(\psi'_e+\psi''_e)$
at each edge,
where $\psi'_e$ and $\psi''_e$ are the cotangent classes\footnote{Note that if $\psi_e'$ or $\psi_e''$ corresponds to a marking on genus 0 component with less then four markings, then it is zero.} at the node which represents the edge $e$.
\end{itemize}
Then we can state Givental--Teleman classification result for CohFTs, proved initially in~\cite{T} for underlying Frobenius potentials.
Let $c$ be a semisimple CohFT with unit on $(V, \eta,\b1)$, and
let~$\omega$ be the topological part of $c$.
For a matrix $R$ satisfying the symplectic condition, define~${R.\omega = R(T(\omega))}$,
where $T\in V[[z]]$
is a series with no terms of degree $0$ or $1$
\begin{align*}
T(z)=z\big(\mathrm{Id}-R^{-1}(z)\big)\cdot \b1\in z^2V[[z]],
\end{align*}
acting on a CohFT $c_{g,n}$ by the formula
\begin{equation*}
(Tc)_{g, n} (x_1\otimes \dots \otimes x_n)=\sum_{m=0}^{\infty} \frac{1}{m!} p_{m*}
(c_{g, n+m}(x_1\otimes \dots \otimes x_n \otimes T(\psi_{n+1})\otimes \dots \otimes T(\psi_{n+m})))
 ,
\end{equation*}
where $p_{m}\colon \oM_{g, n+m}\to \oM_{g, n}$ is the morphism forgetting the last $m$ markings.

By \cite[Proposition 2.12]{PPZ}, $R.\omega$ is a CohFT with unit on $(V,\eta, \b1)$.
The Givental--Teleman classification asserts
the {\em existence} of a unique $R$-matrix which exactly
recovers $c$.

\begin{theorem*}[Teleman]
There exists a unique symplectic matrix
$R\in \mathrm{Id}+z\cdot \End (V)[[z]]$
which reconstructs $c$ from $\omega$,
$c=R.\omega $, as a CohFT with unit.
\end{theorem*}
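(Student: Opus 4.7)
The plan is to separate existence and uniqueness. I would handle uniqueness first via an inductive matching of coefficients in the stable-graph formula \eqref{romega}, and then handle existence by exploiting semisimplicity to produce a canonical $R$-matrix from the Frobenius manifold structure on $V$.

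\textbf{Uniqueness.} Suppose two symplectic matrices $R$ and $R'$ both give $R.\omega = R'.\omega = c$. Write $R = R' S$ with $S \in {\mathsf{Id}} + z \cdot \mathrm{End}(V)[[z]]$ symplectic, and proceed by induction on the smallest $k \ge 1$ with $S_k \neq 0$. Expanding via \eqref{romega}, the difference $(R.\omega)_{g,n} - (R'.\omega)_{g,n}$ has a leading-order contribution proportional to $S_k$, appearing either as $S_k \psi_l^k$ on a leg $l$, inside an edge contribution, or on a dilaton leg of a stable graph. Choosing $(g,n)$ small enough that this contribution lands in a nontrivial tautological class of $H^*(\oM_{g,n})$ and using nondegeneracy of $\eta$ forces $S_k = 0$, contradicting minimality. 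The symplectic constraint on $S$ is used to cancel the symmetric part that the edge contributions would otherwise contribute.

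\textbf{Existence.} Semisimplicity of $(V, \bullet, \mathbf{1})$ produces an idempotent basis in which $\omega$ becomes a direct sum of rescaled one-dimensional trivial CohFTs. Dubrovin's formalism then yields canonical coordinates on the associated Frobenius manifold, an Euler vector field, and a quantum connection whose formal asymptotic solution near $z = 0$ defines a symplectic series $R(z) \in {\mathsf{Id}} + z \cdot \mathrm{End}(V)[[z]]$. This is the candidate $R$-matrix. By construction $R.\omega$ agrees with $c$ in genus $0$, since the genus-$0$ data of a CohFT are completely encoded in the Frobenius manifold structure and thus in the spectrum of the quantum product together with the pairing $\eta$.

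\textbf{Main obstacle.} The difficult step is showing that $R.\omega$ matches $c$ in every higher genus. This is the heart of Teleman's theorem and cannot be reduced to a formal manipulation of the axioms. Teleman's original argument invokes the Madsen-Weiss theorem on the stable cohomology of the mapping class group $\mathrm{Mod}_g$, combined with the structure of tautological relations on $\oM_{g,n}$, to show that any semisimple CohFT with unit is rigidly determined by its genus-$0$ data together with the one-loop correction encoded in $R_1$. An alternative route would be to propagate the match inductively in $2g - 2 + n$ using Getzler's elliptic relation and higher-genus tautological relations, compatibly with the gluing axioms \textnormal{(ii)} of Definition \ref{defcohft}. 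Bundling these gluing axioms into the induction, and verifying that no new freedom enters at each stage, is the step I expect to be hardest, and is the reason a genuine geometric input (Madsen-Weiss or equivalent) is unavoidable.
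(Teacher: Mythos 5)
This statement is not proved in the paper at all: it is Teleman's classification theorem, quoted and attributed to \cite{T}, and the paper uses it strictly as a black box. So there is nothing internal to compare your argument against; the only question is whether your proposal stands on its own, and it does not. Your own text concedes the decisive point: the entire content of the theorem is the higher-genus matching, which you label the ``main obstacle'' and defer to ``Madsen--Weiss or equivalent.'' You are right that Teleman's actual proof runs through Harer stability and the Mumford conjecture, but a proof that outsources its central step to an unproved geometric input is an outline, not a proof.

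Beyond that, the existence step has a concrete gap. Dubrovin's formal fundamental solution at the irregular singularity of the quantum connection is \emph{not} unique for a general semisimple Frobenius manifold: it is determined only up to right multiplication by $\exp\bigl(\sum_{k\ge 1} a_k z^k\bigr)$ with each $a_k$ diagonal in canonical coordinates, and this ambiguity is removed only in the homogeneous (conformal) case by the Euler vector field --- exactly the mechanism the paper exploits for the shifted Witten class via the recursion $[R_{m+1},\xi]=(m+\mu)R_m$, but not a hypothesis of Teleman's theorem. So your candidate $R$ is not well defined from the Frobenius structure alone; the correct $R$ must be extracted from the data of $c$ itself. Relatedly, the claim that ``the genus-$0$ data of a CohFT are completely encoded in the Frobenius manifold structure'' is false: $c_{0,n}$ is a full cohomology class on $\oM_{0,n}$, while the Frobenius potential records only its top-degree integrals, so genus-$0$ agreement of $R.\omega$ with $c$ is something to prove, not something that holds ``by construction.'' The uniqueness sketch is the standard argument and is fine in spirit, though you should exhibit explicitly which nonvanishing tautological classes (e.g.\ $\psi_1^k$ on $\oM_{0,n}$ for $n$ large) detect $S_k$, and note that the symplectic condition is needed for the edge contribution to be defined at all (divisibility of the numerator by $\psi'_e+\psi''_e$), not merely to ``cancel a symmetric part.''
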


\section[Tautological relations at semisimple point]{Tautological relations at semisimple point:\\ $\boldsymbol{\widetilde{\tau} = (0, r \phi, 0,\ldots, 0)}$}\label{tautrel}

We chose this point since at this point
one can derive the relatively simple expression for the topological field theory of the corresponding shifted Witten's CohFT, although the closed formulas for $R$-matrix are not known. Acting on the topological field theory by $R$-matrix, we successfully deduce the PPZ relations (also called $r$-spin relations)
 for $r\geq3$ in tautological ring, using degree vanishing argument. We will follow Pandharipande--Pixton--Zvonkine \cite{PPZ16}.

The shifted along the second basis vector $e_{1}\in V_r$ Witten's CohFT is
semisimple and has a~relatively simple expression for its topological part.
As we saw in Example~\ref{swc},
the quantum product at $\widetilde{\tau}$ is given by
\begin{align*}
\widetilde{e}_a \bullet_{\widetilde{\tau}}\widetilde{e}_b =
\begin{cases}
\widetilde{e}_{a+b} & \text{if} \ a+b \leq r-2 ,\\
\widetilde{e}_{a+b-r+1} & \text{if} \ a+b \geq r-1 .\\
\end{cases}
\end{align*}

By \cite[Proposition 4.2]{PPZ16}, the associated topological field theory takes the form
 \begin{align}
 \label{top}
\omega^{r,\widetilde{\tau}}_{g,n} (\widetilde{e}_{a_1}\otimes \cdots \otimes \widetilde{e}_{a_n})
= \phi^{(g-1)\frac{r-2}{r-1}}(r-1)^g \cdot \delta ,
\end{align}
where $\delta$ equals~$1$ if $g-1-\sum_{i=1}^n a_i$ is divisible by $r-1$ and~$0$ otherwise.
The idea is the following. Firstly, from definition of $\widetilde{e}_a$ observe that
\begin{align*}
\omega^{r,\widetilde{\tau}}_{0,3}(\widetilde{e}_{a},\widetilde{e}_{b},\widetilde{e}_{c}) =
\begin{cases}
\phi^{-\frac{r-2}{r-1}} & \text{if} \ a+b+c =-1 \mod r-1 ,\\
0 & \text{else},
\end{cases}
\end{align*}
and hence \smash{$\eta_{ab}=\phi^{-\frac{r-2}{r-1}}\delta_{a+b,r-2}$} in a basis $\{\tilde{e}_i\}$.
Then \smash{$\omega^{r,\widetilde{\tau}}_{g,n}$} for general $g$ and $n$ is the same as the restriction of \smash{$\Omega^{r,\widetilde{\tau}}_{g,n}$} to a maximally degenerate curve $[C]\in \oM_{g,n}$ with $2g-2+n$ rational components and $3g-3+n$ nodes.
From this, 3-valent dual graph we see that the sum of the insertions to $\omega^{r,\widetilde{\tau}}_{0,3}$ from $\left\{0,\dots,r-2\right\}$ plus one is divisible by $r-1$ and the sum of the insertions on each edge is $r-2$. From this, it is clear that $g-1+\sum a_i$ must be divisible by
$r-1$. We~place an arbitrary insertion on a single branch of a node at every independent cycle of the dual graph, then the other insertions are uniquely determined. This gives us a factor $(r-1)^g$. The factor \smash{$\phi^{(g-1)\frac{r-2}{r-1}}$} comes from matching nodes and
rational component powers of $\phi$.

There is an approach for computing $R$-matrix recursively explained in \cite[Proposition~4.4]{PPZ16}. The upshot is the explicit formula for the coefficients.

In a basis $\widetilde{e}_0,\dots,\widetilde{e}_{r-2}$ the $R$-matrix $(R_m)^b_a$, $m\geq 0$, $a,b \in\{0,\dots,r-2\}$ has coefficients
\begin{align*}
(R_m)^b_a = \bigl[-r(r-1)\phi^{\frac{r}{r-1}}\bigr]^{-m} P_m(r,r-2-b) \qquad
 \mbox{if} \quad b+m = a \mod{r-1},
\end{align*}
and 0 otherwise.
The inverse matrix $R^{-1}(z)$ has coefficients
\begin{align*}
\big(R_m^{-1}\big)^b_a = \big[r(r-1)\phi^{\frac{r}{r-1}}\big]^{-m} P_m(r,a) \qquad
 \mbox{if} \quad b+m = a \mod{r-1},
 \end{align*}
 and 0 otherwise.
Here the polynomials $P_m(r,a)$ with initial condition $P_0(r,a) = 1$ are defined by the following recursive procedure. For $m \geq 1$,
\begin{align*}
P_m(r,a)={}&
\frac12\sum_{b=1}^a (2mr-r-2b) P_{m-1}(r,b-1)\nonumber\\
& - \frac1{4mr(r-1)}
\sum_{b=1}^{r-2} (r-1-b)(2mr-b)(2mr-r-2b) P_{m-1}(r,b-1) .\label{Eq:DefP}
\end{align*}
In fact, we will need only first two values
\begin{align*}
P_0 = 1, \qquad
P_1 = \frac12 a(r-1-a) - \frac1{24}(2r-1)(r-2).
\end{align*}

Now let $\mathsf{W}^{r,\widetilde{\tau}}$ be the cohomological field theory given by the shift of Witten's
$r$-spin class by the vector $\widetilde{\tau} = (0, r \phi, 0,\ldots, 0)$. Using the Givental--Teleman classification theorem, $\mathsf{W}^{r,\widetilde{\tau}}$ can be recovered as an $R$-matrix action on the topological field theory $\omega ^{r,\widetilde{\tau}}$ and by \cite[Theorem~9]{PPZ16} Witten's $r$-spin class $W^r_{g,n}(a_1,\dots,a_n)$ equals the part of \smash{$\mathsf{W}^{r, \widetilde{\tau}}_{g,n}$} of degree
\begin{align*}
\D^r_{g,n}(a_1, \dots, a_n) = \frac{(r-2)(g-1) + \sum_{i=1}^n a_i}r
\end{align*}
in $H^*\big(\overline{\mathcal{M}}_{g,n}\big)$.
The parts of \smash{$\mathsf{W}^{r, \widetilde{\tau}}_{g,n}$} of degree
higher than $\D^r_{g,n}$ vanish. This theorem is the key point to find relations.

Now we describe the procedure of finding $r$-spin relations at the point $\widetilde{\tau}$ from the shifted Witten's class. It will be convenient to specialize the decoration procedure introduced in Section~\ref{GTC} to the case of the reconstruction of the shifted Witten's CohFT $\mathsf{W}^{r, \widetilde{\tau}}_{g,n}$ from its topological part.

Fix $r\geq 3$. Fix $n$ numbers $0\leq a_1,\dots,a_n\leq r-2$. All constructions below depend on an auxiliary variable $\phi$ which keeps track
the codimension and we fix its exponent $d$. A {\em tautological $r$-spin relation} $T(g,n,r,a_1,\dots,a_n,d)=0$ depends on these choices, and it is obtained as~\smash{$T = r^{g-1}\sum_{k=0}^\infty \pi^{(k)}_* T_k/k!$}, where $T_k$ is the coefficient of $\phi^d$ in the expression in the decorated stable graphs of $\oM_{g,n+k}$ described below, and $\pi^{(k)}\colon \oM_{g,n+k}\to \oM_{g,n}$ forgets the last $k$ points.
Fix a stable graph $\Gamma \in \mathsf{G}_{g,n+k}$, and then equip
\begin{enumerate}\itemsep=0pt
\item[(i)] each vertex $v$ of $\Gamma$ with \smash{$\omega ^{r,\widetilde{\tau}}_{g(v),n(v)}$},

\item[(ii)]the first n legs by \smash{$\sum_{m=0}^\infty \big(R^{-1}_m\big)_{a_i}^b \psi_i^m \widetilde{e}_b$}, $i=1,\dots,n$,

\item[(iii)]the $k$ extra legs (the dilaton legs) with \smash{$-\sum_{m=1}^\infty \big(R^{-1}_m\big)_{0}^b \psi^{m+1}_{n+i} \widetilde{e}_b$}, $i=1,\dots,k$,

\item[(iv)]each edge $e$, where we denote by $\psi'_e$ and $\psi''_e$ the $\psi$-classes on the two branches of the corresponding node, with
\[
\mathrm{Cont}(e)= \frac{\eta^{i'i''}-\sum_{m',m''=0}^\infty \big(R^{-1}_{m'}\big)_{j'}^{i'} \eta^{j'j''} \big(R^{-1}_{m''}\big)_{j''}^{i''} (\psi'_e)^{m'}(\psi''_e)^{m''} }{\psi'_e+\psi''_e} \widetilde{e}_{i'}\otimes \widetilde{e}_{i''}.
\]
\end{enumerate}
The summations above by repeating indices is assumed.
Then $T_k$ is defined as the sum over all decorated stable graphs obtained by the contraction of all tensors assigned to their vertices, legs, and edges, further divided by the order of the automorphism group of the graph.

Observe that the exponent $d$ of $\phi$ comes from many places, so let us collect the total degree.%
\begin{itemize}\itemsep=0pt
 \item Every time we fed a leg we get a factor
$\phi^{-m r/(r-1)}$ from $R_m^{-1}$ and if we are
interested in relations in the group $RH^{2D}\big(\oM_{g,n},\mathbb{Q}\big)$, then the sum of indices $\sum m_i$ of matrices $R_m^{-1}$ should be equal to $D$. Hence, we get a factor \smash{$\phi^{-\frac{rD}{r-1}}$}.

\item At every vertex $v$ equipped with \smash{$\omega ^{r,\widetilde{\tau}}_{g(v),n(v)}$}, we get a contribution $\phi ^{(g(v)-1)(r-2)/(r-1)}$ and the total contribution from all the vertices will be \smash{$\phi^{(\sum^{\nu}_v g(v)-\nu)\frac{r-2}{r-1}}=\phi^{g-1-\delta}$}, where $\nu$ is the number of irreducible components and $\delta$ is the number of nodes the curve $\Gamma$ has.

\item Since we wrote each $R_m$ in a basis $\{\widetilde{e}_i\}$ and fed the first $n$ legs by the primary fields $e_1,\dots,e_n$, we should change a basis to $\{\widetilde{e}_i\}$. Hence, we earn an additional factor \smash{$\phi^{\frac{1}{r-1}\sum_i a_i}$} after the change of basis \smash{$e_i = \phi^{\frac{i}{r-1}}\widetilde{e}_i$}.

 \item Dilaton legs will not contribute in the exponent of $\phi$ since they are fed with $e_0=\widetilde{e}_0$.

 \item Since each edge of $\Gamma$ contains an inverse metric matrix $\eta^{ij}$, it gives a factor \smash{$\phi^{\frac{r-2}{r-1}}$} and the total factor will be \smash{$\phi^{\frac{r-2}{r-1}\delta}$}. Recall that the number of edges of $\Gamma$ is the number of nodes on the corresponding curve.
 \end{itemize}

 Summing up, the total degree $d$ of variable $\phi$ for the graph $\Gamma$ in codimension $D$ satisfies
 \begin{align*}
 d(r-1) = \sum a_i + (g-1)(r-2) - r D.
 \end{align*}
 From the topological part expression (\ref{top}), we can say that
 \begin{align*}
 \sum a_i = g-1+D+x(r-1)
 \end{align*}
 for $x\geq 0$.
 Now if Witten's $r$-spin class vanishes when $D >\D^r_{g,n} $ it is easy to see that $d$ must be negative.

 Finally, observe that
\begin{align*}
d(r-1) = g-1+D+x(r-1) + (g-1)(r-2) - r D = (g - 1- D + x)(r - 1),
\end{align*}
and hence
$ d<0$ if and only if $D\geq g+x$.

This shows that Pixtons's relations do not allow us to find relations in small degrees directly. We are interested in relations in complex degree 1 and to get them we need firstly to find relations in cohomology of higher degrees of the space with larger number of points and then push them forward by the forgetful morphism. For example, we could find the relations in the group $RH^4\big(\oM_{2,n+1},\mathbb{Q}\big)$ and then push them forward by forgetting the last point. Thus, we would get relations in the group $RH^2\big(\oM_{2,n},\mathbb{Q}\big)$, but this approach is technically difficult and we proceed in another way in the genus 2 case. But the genus 1 case can be worked out directly.

\section{The case of marked genus 1 curves}
In this section by applying the PPZ construction to the genus 1 case, we derive tautological relations and compare them with Arbarello--Cornalba's result. In this case, the PPZ approach works in each positive degree since $D\geq g+x =1+x$ is valid for each $D\geq1$ if we choose~${x=0}$.
Firstly, we apply described construction to the space $\oM_{1,2}$ and then calculate linear relations in the space $\oM_{1,n}$.
As a warm-up, we discuss linear relations in the space $\oM_{1,2}$. Further, we fix ${\phi=1}$ for simplicity.
The cohomology group $RH^2\big(\oM_{1,2},\mathbb{Q}\big)$ is generated by $\psi_1$, $\psi_2$, $\kappa_1$, $\delta_{\rm sep}$, $\delta_{\rm nonsep}$ with relations
\begin{align*}
\psi_1 = \psi_2,\qquad
2\psi_1 =\delta_{\rm sep}+\kappa_1,\qquad
 \psi_1 =\dfrac{1}{12}\delta_{\rm nonsep}+\delta_{\rm sep},
\end{align*}
see \cite[Theorem 2.2]{Cor}.

\begin{Example}
For $r\geq3$, the PPZ relations between the generators of the group $RH^2\big(\oM_{1,2},\mathbb{Q}\big)$ generate the set of relations from above.
\end{Example}

\begin{proof}
We have $D=1$, $x=0$, $g=1$, $a_1+a_2 = 1$; the degree 1 part of stable graph expressions gives us tautological relations since $1=D\geq g+x =1$.
Generators of this group are $\psi_1$, $\psi_2$, $\kappa_1$, $\delta_{\rm sep}$, $\delta_{\rm nonsep}$, where $\delta_{\rm sep}$ and $\delta_{\rm nonsep}$ are Poincar\'e dual classes to the generic point of divisor strata, see Figure~\ref{fig:chefigura}. The graphs with more then 1 edge will not contribute by dimensional reasons.

\begin{figure}[t]\centering
	\begin{subfigure}{0.25\textwidth}\centering
		\begin{tikzpicture}\label{I}
		
		\node[draw,circle,minimum size=\rbig] (g) at (0,0) {\( 1\)} ;
		
		\node[] (1) at (-0.6,-1.5) {\( a_1 \)};
		\node[] (n) at (0.6,-1.5) {\( a_2\)};
		
		\path (g) edge (1);
		\path (g) edge (n);
		\end{tikzpicture}
		\renewcommand{\thesubfigure}{i}
		\caption{smooth part}
	\end{subfigure}
\quad
	\begin{subfigure}{0.25\textwidth}\centering
		\begin{tikzpicture}\label{II}
		\node[draw,circle,minimum size=\rbig] (g) at (0,0) {\( 1 \)} ;
		\node[draw,circle,minimum size=\rsmall] (0) at (1.5,0) {\( 0\)};
	
		\node[] (1) at (3,-0.5) {\( a_1\)};
		\node[] (2) at (3,0.5) {\( a_2 \)};
		\path (g) edge (0);
		
		\path (0) edge (1);
		\path (0) edge (2);
		\end{tikzpicture}
	\renewcommand{\thesubfigure}{ii}
		\caption{\( \delta_{\rm sep} \)}
	\end{subfigure}
\quad
	\begin{subfigure}{0.25\textwidth}\centering
		\begin{tikzpicture}\label{III}
		\node[draw,circle,minimum size=\rbig] (0) at (3,0) {\( 0 \)} ;
		%\node[draw,circle,minimum size=\rsmall] (0) at (1.5,0) {\( 0\)};
	
		\node[] (1) at (2.4,-1.5) {\( a_1 \)};
		\node[] (2) at (3.6,-1.5) {\( a_2\)};
		\path (0) edge [loop above] node {} (0);
		
		\path (0) edge (1);
		\path (0) edge (2);
		\end{tikzpicture}
		\renewcommand{\thesubfigure}{iii}
		\caption{\( \delta_{\rm nonsep} \)}
	\end{subfigure}

\caption{The graphs of strata of \texorpdfstring{$\oM_{1,2}$}{M 1,2} of codimension at most 1.}	\label{fig:chefigura}
\end{figure}

Our stable graphs will contain 0, 1 or 2 dilaton leaves, so we will consider these cases separately. The smooth part gives

\begin{gather*}
 \left[\sum_{m=0}^{\infty}\sum^{r-2}_{b_1,b_2=0} \big(R_m^{-1}\big)^{b_1}_{a_1}\psi_1^m \sum_{m=0}^{\infty} \big(R_m^{-1}\big)^{b_2}_{a_2}\psi_2^m \omega ^{r,\widetilde{\tau}}_{1,2}(\widetilde{e}_{b_1},\widetilde{e}_{b_2})\right]_2\\
\qquad=\left[\sum_{b_1,b_2=0}^{r-2}\big(\big(R_0^{-1}\big)^{b_1}_{a_1}+\big(R_1^{-1}\big)^{b_1}_{a_1}\psi_1\big)
\big(\big(R_0^{-1}\big)^{b_2}_{a_2}+\big(R_1^{-1}\big)^{b_2}_{a_2}\psi_2\big) \omega ^{r,\widetilde{\tau}}_{1,2}(\widetilde{e}_{b_1},\widetilde{e}_{b_2})\right]_2\\
\qquad=(r-1)\big(\big(R_1^{-1}\big)^{a_1-1}_{a_1}\psi_1+\big(R_1^{-1}\big)^{a_2-1}_{a_2}\psi_2\big).
\end{gather*}
Since the factor $\bigl(r(r-1)\phi^{r/(r-1)}\bigr)^{-1}$ comes from each graph, we will omit it.

Let us develop the edge factor\footnote{See Section \ref{tautrel}.} in more detail up to linear terms in the $\psi$-classes.
\begin{align*}
\mathrm{Cont}(e)(\psi'_e+\psi''_e) &= \left[\eta^{i'i''}-\sum_{m',m''=1}^\infty \big(R^{-1}_{m'}\big)_{j'}^{i'} \eta^{j'j''} \big(R^{-1}_{m''}\big)_{j''}^{i''} (\psi'_e)^{m'}(\psi''_e)^{m''}\right]\widetilde{e}_{i^\prime}\otimes\widetilde{e}_{i^{\prime\prime}}\\
&=\bigl[-\big(R_1^{-1}\big)^{i''}_{r-2-i'}(\psi'+\psi'') + \text{higher degree terms}\bigr]\widetilde{e}_{i^\prime}\otimes\widetilde{e}_{i^{\prime\prime}} .
\end{align*}

The second and third graphs give
\begin{gather*}
 -\delta_{\rm sep}\sum_{b_1,b_2=0}^{r-2}\big(\big(R_0^{-1}\big)^{b_1}_{a_1}+\big(R_1^{-1}\big)^{b_1}_{a_1}\psi_1\big)
\big(\big(R_0^{-1}\big)^{b_2}_{a_2}+\big(R_1^{-1}\big)^{b_2}_{a_2}\psi_2\big)\big(R_1^{-1}\big)^{i''}_{r-2-i'},\\
\omega ^{r,\widetilde{\tau}}_{0,3}(\widetilde{e}_{b_1},\widetilde{e}_{b_2},\widetilde{e}_{i'})\omega ^{r,\widetilde{\tau}}_{1,1}
(\widetilde{e}_{i''})
=-(r-1)\delta_{\rm sep}\big(R_1^{-1}\big)^{0}_{1},\\
 -\delta_{\rm nonsep} \sum_{b_1,b_2=0}^{r-2} \big(R_0^{-1}\big)^{b_1}_{a_1}\big(R_0^{-1}\big)^{b_2}_{a_2}
 \sum_{i=0}^{r-2}\big(R_1^{-1}\big)^{i'}_{r-2-i''}
\omega ^{r,\widetilde{\tau}}_{0,4}(\widetilde{e}_{b_1},\widetilde{e}_{b_2},\widetilde{e}_{i'},\widetilde{e}_{i''}) \phi^{(r-2)/(r-1)}.
\end{gather*}

 Given a graph $\Gamma$ with a dilaton leg, the vertex carrying this leg will support a class of positive degree after push-forward. Thus a codimension 1 class can only be obtained if the graph $\Gamma$ was already trivial, see Figure~\ref{fig:SHefigura}.

\begin{figure}[t]\centering
	\begin{subfigure}{0.3\textwidth}\centering
		\begin{tikzpicture}\label{IV}
		
		\node[draw,circle,minimum size=\rbig] (g) at (0,0) {\( 1\)} ;
		
		\node[] (1) at (-0.6,-1.5) {\( a_1 \)};
		\node[] (n) at (0.6,-1.5) {\( a_2\)};
		\node[] (u) at (0.0,1.5) {};

		\path (g) edge (1);
		\path (g) edge (n);
		\draw (g) --(u) [dashed];
		\end{tikzpicture}
		\renewcommand{\thesubfigure}{iv}
		\caption{}
	\end{subfigure}

\caption{The remaining contributing graph with a dilaton leg.}
	\label{fig:SHefigura}
\end{figure}

The fourth graph gives
 \begin{gather*}
-\big(\big(R_0^{-1}\big)^{b_1}_{a_1}+\big(R_1^{-1}\big)^{b_1}_{a_1}\psi_1+(R_2^{-1})^{b_1}_{a_1}\psi_1^2\big)
\big(\big(R_0^{-1}\big)^{b_2}_{a_2}+\big(R_1^{-1}\big)^{b_2}_{a_2}\psi_1+(R_2^{-1})^{b_2}_{a_2}\psi_1^2\big),\\
\big(\big(R_1^{-1}\big)^b_0 \kappa_1+(R_2^{-1})^b_0\kappa_2\big)\omega ^{r,\widetilde{\tau}}_{1,3}(\widetilde{e}_{b_1},\widetilde{e}_{b_2},
\widetilde{e}_{b})=-(r-1)\big(R_1^{-1}\big)^{r-2}_0\kappa_1.
\end{gather*}

So, summing up we get a relation
\begin{gather*}
(r-1)[P_1(r,a_2)\psi_2+P_1(r,a_1)\psi_1]-\dfrac{1}{24}\big(2-3r+r^2\big)\delta_{\rm nonsep}, \\
- (r-1)P_1(r,1)\delta_{\rm sep}-(r-1)P_1(r,0)\kappa_1 = 0.
\end{gather*}

This equality must hold for every $r\geq3$, so collecting degrees of a variable $r$, we get following relations:
\begin{gather*}
\psi_1 = \psi_2,\qquad
2\psi_1 =\delta_{\rm sep}+\kappa_1,\qquad
 \psi_1 =\dfrac{1}{12}\delta_{\rm nonsep}+\delta_{\rm sep}.\tag*{\qed}
\end{gather*} \renewcommand{\qed}{}
\end{proof}

Similarly, we derived all PPZ relations in $RH^2\big(\oM_{1,n},\mathbb{Q}\big)$ for all $r$ simultaneously, and prove that they in fact are the full set of relations in $H^2\big(\oM_{1,n},\mathbb{Q}\big)$. In fact, we prove a stronger statement that the PPZ relations for every particular $r\geq3$ generate the full set of relations.

The group $H^2\big(\oM_{1,n},\mathbb{Q}\big)$ is generated by
$\psi_1$, $\dots$, $\psi_n$, $\kappa_1$, $\delta^{0}_{J}$, $\delta_{\rm nonsep}$ with relations
\begin{gather}
12\psi_i = \delta_{\rm nonsep} + 12\sum_{\substack{J\subset \{1,\dots,n\}|i\in J \\ |J|\geq 2}} \delta^{0}_{J}, \quad i=1,\dots,n,\qquad
 \kappa_1 = \sum^{n}_{i=1}\psi_i - \sum_{J\subset \{1,\dots,n\}}\delta^{0}_{J},\label{genus1}
\end{gather}
see \cite[Theorem 2.2]{Cor}.

\begin{Proposition}
For every particular $r\geq3$, the PPZ relations between the generators of the group $H^2\big(\oM_{1,n},\mathbb{Q}\big)$ generate the set of relations from above.
\end{Proposition}

\begin{proof}
 There are
 \begin{align*}
 n+1+\binom{n}{2}+\dots+\binom{n}{n}+1 = 2^n+1
 \end{align*}
 generators $\psi_1$, $\dots$, $\psi_n$, $\kappa_1$, $\delta^{0}_{J}$, $\delta_{\rm nonsep}$.
 Here $\delta^{0}_{J}$ is the class of the divisor representing a curve with markings from $J\subset \{1,\dots, n\}$, $|J|\geq 2$ on the genus~0 component.
Having $n$ markings, we have $n$ relations coming from $n$ different choices of a vector $(a_1,\dots,a_n)$ with $\sum^{n}_{i=0}a_i= 1$. Further, in the arguments of the topological part, we write just $b_i$ instead of $\widetilde{e}_{b_i}$ for short.
The contribution of the smooth part has the form
\begin{gather*}
\sum_{i=1}^{n}\big[\big(R^{-1}_{0}\big)^{a_i}_{a_i}+\big(R^{-1}_{1}\big)^{a_i-1}_{a_i}\psi_i\big]\omega_{1,n}(0,\dots,0)\\
\qquad=(r-1)\big[\psi_1 \big(R^{-1}_{1}\big)^{a_1-1}_{a_1}+\dots+\psi_n \big(R^{-1}_{1}\big)^{a_n-1}_{a_n}\big].
\end{gather*}
The contribution of a graph $\delta_J^0$ with separating node has the form
\begin{gather*}
-\delta^{0}_{J}\prod^{n}_{i=1}\big[\big(R^{-1}_{0}\big)^{a_i}_{a_i}+\big(R^{-1}_{1}\big)^{a_i-1}_{a_i}\psi_i\big]\sum^{r-2}_{p,q=0}\omega_{1, |I|+1}(a_{i_1},\dots,a_{i_{|I|}},p),\\
\omega_{0, |J|+1}(a_{j_1},\dots,a_{j_{|J|}},q)\big(R^{-1}_{1}\big)^{p}_{r-2-q},
\end{gather*}
where
\begin{gather*}
\omega_{1, |I|+1}(a_{i_1},\dots,a_{i_{|I|}},p) = (r-1)
\begin{cases}
1 & \text{if } a_I+p = 0 \mod r-1,\\
0 & \text{otherwise},
\end{cases}
\\
\omega_{0, |J|+1}(a_{j_1},\dots,a_{j_{|J|}},q) =
\begin{cases}
1 & \text{if } 1+a_J+q = 0 \mod r-1 ,\\
0 & \text{otherwise} .
\end{cases}
 \end{gather*}
From this, we immediately see that each
\smash{$
\big(R^{-1}_{1}\big)^{p}_{r-2-q} = \big(R^{-1}_{1}\big)^{-a_I}_{a_J}
$}
is nonzero since $r-2+a_I+1-1 = -a_J \mod r-1$ is true for every complementary subsets $I,J\subset \left\{1\dots n\right\}$. The total contribution over graphs with one separating node is
\begin{align*}
-\sum_{\substack{J\subset {1,\dots,n}\\|J|\geq 2}} \delta^{0}_{J}(r-1)\big(R^{-1}_{1}\big)^{-a_I}_{a_J},
\end{align*}
and specifying $a_i = 1$, $a_j = 0$, $i\neq j$, we would get
\begin{align*}
-(r-1)P_1(r,1)\sum_{J| i \in J} \delta^{0}_{J} -(r-1)P_1(r,0)\sum_{J| i \notin J} \delta^{0}_{J}.
\end{align*}

The contribution of a graph with a nonseparating node
 gives $ -\delta_{\rm nonsep}\frac{1}{24}\big(2-3r+r^2\big)$ as before.

The contribution of a graph with one dilaton leg has the form
\begin{align*}
-\prod^{n}_{i=1}\big[\big(R^{-1}_{0}\big)^{a_i}_{a_i}+\big(R^{-1}_{1}\big)^{a_i-1}_{a_i}\psi_i\big]\sum^{r-2}_{w=0}(R^{-1}_{0})^{w}_{0}\kappa_1 \omega_{1,n+1}(a_1,\dots,a_n,w),
\end{align*}
which is nonzero if and only if $w = r-2$ and equals $-\big(R^{-1}_{1}\big)^{r-2}_{0}(r-1)\kappa_1$.

So, for each fixed $i = 1,\dots,n$, we have the relation
\begin{gather*}
(r-1)\bigg[P_1(r,1)\psi_i +P_1(r,0) \sum_{j\neq i}\psi_j\bigg],\\
-(r-1)P_1(r,1)\sum_{J| i \in J} \delta^{0}_{J} -(r-1)P_1(r,0)\sum_{J| i \notin J} \delta^{0}_{J},\\
-P_1(r,0)(r-1)\kappa_1 -\delta_{\rm nonsep}\dfrac{1}{24}\big(2-3r+r^2\big) = 0,
\end{gather*}
which is a polynomial in $r$ of degree $3$. Taking coefficients and multiplying by 12 and 24, respectively, to clear denominators, we get
\begin{gather*}
\big[r^3\big]\colon\ \kappa_1 = \sum^{n}_{i=1}\psi_i - \sum_{J\subset \{1,\dots,n\}}\delta^{0}_{J},\\
\big[r^2\big]\colon\ 19\sum_{J| i \in J}\delta^{0}_{J} + 7\sum_{J| i \notin J}\delta^{0}_{J} + \delta_{\rm nonsep} + 7\kappa_1 = 19\psi_i + 7\sum_{j\neq i}\psi_j,
\end{gather*}
and eliminating kappa class, we find
\begin{gather*}
12\psi_i = \delta_{\rm nonsep} + 12\sum_{J|i\in J} \delta^{0}_{J}, \qquad
 i=1,\dots,n.
\end{gather*}
The relations coming from $[r]$, $[1]$ are consequences of relations coming from $\big[r^3\big]$, $\big[r^2\big]$.
Suppose now that we have another PPZ relation $Q=0$ in degree 1. Put $r=3$ for simplicity. As we noted at the end of Section~\ref{tautrel}, this relation came from a certain choice of a tuple $(a_1,\dots, a_n)$ such that~${\sum a_i = 1 \mod 2}$ and \smash{$1>\frac{1}{3}\sum a_i$}. There are only $n$ possibilities corresponding to a~choice of one $a_i=1$ and 0 the others. So, $Q$ is a linear combination of them.

Moreover, we claim that we get relations (\ref{genus1}) for every particular $r\geq3$, not just for simultaneously all $r$. For this, it is sufficient to express $\psi$-classes in terms of boundary divisor classes and use the linear independence of the latter, which follows from \cite[Theorem 2.2]{Cor}.

Namely, we rewrite the system of relations in the form
\begin{gather*}
(r-1)P_1(r,1)\psi_1+(r-1)P_1(r,0)\psi_2+\dots+(r-1)P_1(r,0)\psi_n -(r-1)P_1(r,0)\kappa_1 =B_1,
\\
\cdots\cdots\cdots\cdots\cdots\cdots\cdots\cdots\cdots\cdots
\\
(r-1)P_1(r,0)\psi_1 + \dots + (r-1)P_1(r,0)\psi_{n-1}\!+(r-1)P_1(r,1)\psi_n\!-(r-1)P_1(r,0)\kappa_1 =B_n,
\\
\psi_1 +\dots+ \psi_n - \kappa_1 = B_{n+1},
\end{gather*}
where the last equation is the coefficient of $r^3$ from above.
Here the terms $B_i$ are linear combinations of boundary divisors with coefficients depending on $r$.

The matrix $M$ on the left-hand side has determinant
\begin{align*}
\det M = (-1)^n(r-1)^{n}(P_1(r,0) - P_1(r,1))^{n-1}((n-1)P_1(r,0) + P_1(r,1)- nP_1(r,0)),
\end{align*}
and after further simplification it is seen that for $ r \geq 3$
\begin{align*}
\det M= -1\Bigg[\dfrac{(r-1)(r-2)}{2}\Bigg]^n \neq 0.
\end{align*}
This means that for every $r\geq3$ each $\psi_i$ can be expressed as a sum of divisor classes, without passing to taking coefficients $\big[r^i\big]$. Expressions of $\psi_i$ coming from $r\geq3$ and $r^{\prime}\geq3$ give a~linear relation on boundary divisor classes, which are linear independent. In other words,
we get the same relations for each particular $r\geq3$. Moreover, these relations are the same as in Arbarello--Cornalba's \cite[Theorem 2.2]{Cor}, where it is proved that it is the full set of relations in~$H^2\big(\oM_{1,n},\mathbb{Q}\big)$.
\end{proof}

\section{The case of marked genus 2 curves}
In this section, by applying the PPZ construction to the genus 2 case, we derive tautological relations and compare them with the Arbarello--Cornalba's result.

The group $RH^2\big(\oM_{2,n},\mathbb{Q}\big)$ is generated by $\kappa_1$, $\psi_1,\dots,\psi_n$ and divisor classes $\delta_0^I$, $\delta_1^I$, $I\subset\{1,\dots,n\}$, $\delta^{\prime}_{\rm irr}$ with one relation
\begin{align*}
	5\bigg(\kappa_1 - \sum \psi_i + \sum_{I\subset{1,\dots,n}}\delta_0^{I}\bigg) = \delta_{\rm irr}^{\prime} + 7\sum_{I\subset{1,\dots,n}}\delta_1^{I},
\end{align*}
see \cite[Theorem 2.2]{Cor}.

\begin{Proposition}
For every particular $r\geq3$, the PPZ relations between the generators of the group $RH^2\big(\oM_{2,n},\mathbb{Q}\big)$ generate the set of relations from above.
\end{Proposition}

\begin{proof}
We proceed as follows. Firstly, we find relations in $\oM_{2}$ using 3-spin structures, then
pull them back by forgetting $n$ points. Get again a PPZ relation and argue why what we get the full set of relations.
We are using the PPZ relations are stable under pulling back via forgetting morphism $\pi\colon \oM_{g,n+k}\to\oM_{g,n}$
\begin{align*}
 \pi^*T(g,n,r,a_1,\dots,a_n,d) = T(g,n+k,r,a_1,\dots,a_n,0,\dots,0,d).
\end{align*}

Now, the Picard group of $\oM_{2}$ is spanned on $\kappa_1$, $\delta_1$ and $\delta_{\rm irr}$, see Figure \ref{fig:SHHefigura}.

\begin{figure}\centering
\begin{subfigure}{0.25\textwidth}\centering
		\begin{tikzpicture}%\label{I}
		\node[draw,circle,minimum size=\rsmall] (0) at (0,0) {\( 1 \)} ;
		\path (0) edge [loop above] node {} (0);
		\end{tikzpicture}
		\renewcommand{\thesubfigure}{i}
		\caption{\( \delta_{\rm irr} \)}
		\end{subfigure}
		\begin{subfigure}{0.25\textwidth}\centering
		\begin{tikzpicture}%\label{II}
		\node[draw,circle,minimum size=\rsmall] (g) at (0,0) {\( 1 \)} ;
		\node[draw,circle,minimum size=\rsmall] (0) at (1.5,0) {\( 1\)};
		\path (g) edge (0);
		\end{tikzpicture}
		\renewcommand{\thesubfigure}{ii}
		\caption{\( \delta_{1} \)}
		\end{subfigure}
		\begin{subfigure}{0.25\textwidth}\centering
		\begin{tikzpicture}%\label{III}
		
		\node[draw,circle,minimum size=\rsmall] (g) at (0,0) {\( 2\)} ;
		
		\node[] (u) at (0.0,1.5) {};
		\draw (g) --(u) [dashed];
		\end{tikzpicture}
		\renewcommand{\thesubfigure}{iii}
		\caption{\( \kappa_1 \)}
		\end{subfigure}
\caption{Divisors in \texorpdfstring{$\oM_{2}$}{M 2}.}	\label{fig:SHHefigura}
\end{figure}

Witten's class has degree
$D^3_{2,0} = \frac{(r-2)}{r}$,
which is less than one, so we get a relation. Graphs in Figure \ref{fig:SHHefigura} give the following contributions:
\begin{gather*}
	-(r-1)^2\kappa_1 \big(R_1^{-1}\big)^{r-2}_0 = -(r-1)^2 \kappa_1 P_1(r,0),
\\
- \sum_{i,j=0}^{r-2}\omega_{1,1}(i)\omega_{1,1}(j)\big(R_1^{-1}\big)^{i}_{r-2-j}\delta_1 = -(r-1)^2 P_1(r,1) \delta_1	,
\qquad
	-(r-1)\delta_{\rm irr} \frac{2-3r+r^2}{24} .
\end{gather*}

When we put $r=3$, after simple algebra, we get
$5\kappa_1 = 7\delta_1 + \delta_{\rm irr}$.

Now we pullback this relation by forgetting $n$ points $\pi\colon \oM_{2,n} \rightarrow \oM_{2}$.
Using the well-known formulas for the pull-backs along the forgetful map
\begin{align*}
 \pi_i^*\kappa_1 = \kappa_1 - \psi_i,\qquad
 \pi_i^*\psi_j = \psi_j - \delta^0_{\{i,j\}},\qquad
 \pi_i^*\delta^h_I = \delta^h_I + \delta^h_{I\cup\{i\}},
\end{align*}
we get, after iterating $n$ times, the relation
\begin{align*}
	5\bigg(\kappa_1 - \sum \psi_i + \sum_{I\subset{1,\dots,n}}\delta_0^{I}\bigg) = \delta_{\rm irr}^{\prime} + 7\sum_{I\subset{1,\dots,n}}\delta_1^{I}.
\end{align*}
 Now suppose we have another PPZ relation $Q=0$ in degree~1. As we noted at the end of Section~\ref{tautrel}, this relation came from a certain choice of a tuple $(a_1,\dots, a_n)$ such that $\sum a_i = 0 \mod 2$ and \smash{$1>\frac{1+\sum a_i}{3}$}. Now we have only one possibility corresponding to the choice when all~$a_i$'s equal to 0. So, $Q$ is proportional to our relation. Moreover, this relation is the same as in Arbarello--Cornalba's \cite[Theorem 2.2]{Cor}. Other choices of $r\geq3$ would lead to the same relation since the set of Arbarello--Cornalba relations is full and PPZ relations hold in cohomology. So, we are done.
\end{proof}

\section{Higher genera and conclusion}

In genus 3 with no markings, we get zero contributions since the parity conditions for vertices are not compatible. We can not satisfy the conditions. For example, curves with a node separating genus~1 and genus~2 components give
\begin{align*}
-\delta_1\sum_{p,q=0}^1\omega_{1,1}(p)\omega_{1,1}(p)\omega_{2,1}(q)\big(R_1^{-1}\big)^p_{1-q}.
\end{align*}
So, $p$ and $q$ must be $p=0$, $q=1$, hence $\big(R_1^{-1}\big)^0_{0}$ is zero. We can not satisfy the conditions ${\sum a_i = 1 \mod 2}$ and \smash{$1>\frac{\sum a_i+2}{3}$} at all.

In genus 4 with no markings, we get the expression of the Witten's class instead of a tautological relation since
$D^3_{4,0} = 1$.
And in higher genera, the degree of Witten's class is bigger than 1 for all $r$. These considerations agree with early computations and do not contradict the fact that the only relations in $RH^2\big(\oM_{g,n},\mathbb{Q}\big)$ with $g\geq 3$ have the trivial form $\delta_h=\delta_{g-h}$, $h=0,\dots, g-1$.

From above, it is clear that the tautological PPZ relations for every particular $r$ are in fact the full set relations in $H^2\big(\oM_{g,n},\mathbb{Q}\big)$
 \begin{align*}
RH^2\big(\oM_{g,n},\mathbb{Q}\big) = H^2\big(\oM_{g,n},\mathbb{Q}\big),
\end{align*}
which confirms Pixton's hypothesis in this codimension.

\subsection*{Acknowledgements}
The author is very thankful to P.~Dunin-Barkowski for numerous discussions and competent advisoring during all the project. I am also grateful to D.~Zvonkine for teaching me a trick used in genus 2 case which led to a significant simplification and for helpful remarks. I thank anonymous referees for numerous tips. The author is partially supported by International Laboratory of Cluster Geometry NRU HSE, RF Government grant, ag. no.~075-15-2021-608 dated 08.06.2021.

%\cite{DOSS,FP1,J18}
\pdfbookmark[1]{References}{ref}
\LastPageEnding

\end{document}